\theoremstyle{plain}
\newtheorem{theorem}{Theorem}[section]
\newtheorem{lemma}[theorem]{Lemma}
\theoremstyle{remark}
\newtheorem*{remarks}{Remarks}
\newcommand{\CC}{{\mathbb C}}
\newcommand{\DD}{{\mathbb D}}
\newcommand{\RR}{{\mathbb R}}
\newcommand{\cD}{{\mathcal D}}
\newcommand{\cH}{{\mathcal H}}
\DeclareMathOperator{\hol}{\mathrm Hol}
\DeclareMathOperator{\supp}{\mathrm supp}
\renewcommand{\hat}{\widehat}
\begin{document}

\title{Weakly multiplicative distributions and weighted Dirichlet spaces}
\date{27 Sep 2021}

\author{Javad Mashreghi}
\address{D\'epartement de math\'ematiques et de statistique, Universit\'e Laval,
Qu\'ebec City (Qu\'ebec),  Canada G1V 0A6.}
\email{javad.mashreghi@mat.ulaval.ca}

\author{Thomas Ransford}
\address{D\'epartement de math\'ematiques et de statistique, Universit\'e Laval,
Qu\'ebec City (Qu\'ebec),  Canada G1V 0A6.}
\email{thomas.ransford@mat.ulaval.ca}

\thanks{First author supported by an NSERC Discovery Grant. Second author  supported by grants from NSERC and the Canada Research Chairs program.}

\begin{abstract}
We show that if $u$ is a compactly supported distribution
on the complex plane such that, for every pair of entire functions $f,g$,
\[
\langle u,f\overline{g}\rangle=\langle u,f\rangle\langle u,\overline{g}\rangle,
\]
then $u$ is supported at a single point.
As an application, we complete the classification of  all weighted Dirichlet spaces on the unit disk that are 
de Branges--Rovnyak spaces by showing that, for such spaces, the weight is necessarily a superharmonic
function. 
\end{abstract}

\keywords{Distribution, weighted Dirichlet space, de Branges--Rovnyak space, superharmonic function}

\subjclass[2010]{46F05, 46E22}

\maketitle


\section{Introduction}\label{S:intro}

In the first part of this note, we study a family of distributions that possessing a 
certain weak multiplicativity property. 
For general background on distribution theory we refer to the book of Friedlander \cite{Fr98}.

Let $u$ be a compactly supported distribution on the complex plane $\CC$.
Thus $u$ is a continuous linear functional on $C^\infty(\CC)$,
where $C^\infty(\CC)$ is endowed with the Fr\'echet-space topology of uniform convergence of all
derivatives on compact sets.
In what follows, we write $\langle u,\psi\rangle$ for $u(\psi)$.

We say that $u$ is \emph{weakly multiplicative} if, for every pair of entire functions $f,g$, we have
\begin{equation}\label{E:fg}
\langle u,f\overline{g}\rangle=\langle u,f\rangle\langle u,\overline{g}\rangle.
\end{equation}
Clearly this implies that, for all integers $j,k\ge0$,
\begin{equation}\label{E:powers}
\langle u,z^j\overline{z}^k\rangle=\langle u,z^j\rangle\langle u,\overline{z}^k\rangle.
\end{equation}
Conversely, if \eqref{E:powers} holds for all $j,k\ge0$, then \eqref{E:fg} holds for
for all polynomials $f,g$,
and because  the Taylor series of an entire function converges to the function in the topology
of $C^\infty(\CC)$, it follows that \eqref{E:fg} holds for all entire functions $f,g$.

It is not hard to see that, if $u$ is strongly multiplicative in the sense that
$\langle u,\phi\psi\rangle=\langle u,\phi\rangle\langle u,\psi\rangle$ for all $\phi,\psi\in C^\infty(\CC)$,
then either $u=0$ or $u=\delta_a$, the Dirac distribution at some $a\in\CC$.
This is no longer true for weakly multiplicative distributions, though it is still the case that they
must be supported at a single point. This is the main thrust of our first theorem.
As usual, we write $\partial:=(1/2)(\partial_x-i\partial_y)$ and $\overline{\partial}:=(1/2)(\partial_x+i\partial_y)$
for the Cauchy--Riemann operators, and $\delta_a$ for the Dirac distribution at $a$.

\begin{theorem}\label{T:dist}
Let $u$ be a compactly supported distribution on $\CC$.
Then $u$ is weakly multiplicative if and only if $u=0$ or  $u=p(\partial)q(\overline{\partial})\delta_a$,
where  $a\in\CC$ and $p,q$ are complex polynomials with $p(0)=q(0)=1$.
\end{theorem}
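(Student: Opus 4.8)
The plan is to convert the pointwise identity \eqref{E:powers} into a factorization of the Fourier--Laplace transform of $u$. I would set
\[
\Phi(\zeta):=\langle u,e^{\zeta z}\rangle,\quad \Psi(\eta):=\langle u,e^{\eta\overline{z}}\rangle,\quad G(\zeta,\eta):=\langle u,e^{\zeta z+\eta\overline{z}}\rangle,
\]
which, since $u$ is compactly supported, are entire of exponential type with Taylor coefficients the moments $\langle u,z^j\rangle$, $\langle u,\overline{z}^k\rangle$ and $\langle u,z^j\overline{z}^k\rangle$; here $G$ is a linear reparametrization of the genuine Fourier--Laplace transform of $u$. Comparing Taylor coefficients shows that \eqref{E:powers} is \emph{equivalent} to the factorization $G(\zeta,\eta)=\Phi(\zeta)\Psi(\eta)$. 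Taking $j=k=0$ in \eqref{E:powers} gives $\langle u,1\rangle\in\{0,1\}$, and if $\langle u,1\rangle=0$ then putting $g=1$, then $f=1$, in \eqref{E:fg} forces every moment to vanish, whence $\hat u\equiv0$ and $u=0$. So I may assume $\langle u,1\rangle=1$, i.e.\ $G(0,0)=\Phi(0)=\Psi(0)=1$.

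The heart of the matter is to deduce from $G=\Phi\Psi$ that $\supp u$ is a single point. Let $K$ denote the convex hull of $\supp u$, and write $h_\Phi(\alpha)=\limsup_{t\to\infty}t^{-1}\log|\Phi(te^{i\alpha})|$ for the indicator of $\Phi$, with $h_\Psi$ and the directional indicator $h_G(\alpha,\beta)$ of $G$ along $(te^{i\alpha},te^{i\beta})$ defined likewise. The factorization gives at once
\[
h_G(\alpha,\beta)=h_\Phi(\alpha)+h_\Psi(\beta)\qquad(\alpha,\beta\in\RR).
\]
On the other hand, by the Paley--Wiener--Schwartz theorem the indicator of the Fourier--Laplace transform of a compactly supported distribution equals the support function of the convex hull of its support; applied to $\Phi$, $\Psi$ and $G$ this yields
\[
h_\Phi(\alpha)=\max_{z\in K}\Re(e^{i\alpha}z),\quad h_\Psi(\beta)=\max_{z\in K}\Re(e^{i\beta}\overline{z}),\quad h_G(\alpha,\beta)=\max_{z\in K}\bigl(\Re(e^{i\alpha}z)+\Re(e^{i\beta}\overline{z})\bigr).
\]
Combining the two displays, the maximum over $K$ of the sum equals the sum of the two maxima for every $\alpha,\beta$; since $K$ is compact, this forces the two linear functionals to attain their maxima at a common point of $K$. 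Choosing $\beta=\pi-\alpha$ turns the second functional into $-\Re(e^{i\alpha}z)$, so this common point both maximizes and minimizes $z\mapsto\Re(e^{i\alpha}z)$ on $K$; hence $K$ lies in a line $\{\Re(e^{i\alpha}z)=\mathrm{const}\}$. As $\alpha$ is arbitrary, $K$ reduces to a single point $a$. I expect this step --- precisely, the use of Paley--Wiener--Schwartz to recover the \emph{exact} support function, ruling out exponential cancellation in the indicators --- to be the main obstacle.

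It remains to identify $u$. A distribution supported at the single point $a$ is, by the structure theorem, a finite combination $u=\sum_{m,n}c_{mn}\partial^m\overline{\partial}^n\delta_a$. Since $\partial e^{\zeta z+\eta\overline{z}}=\zeta e^{\zeta z+\eta\overline{z}}$ and $\overline{\partial}e^{\zeta z+\eta\overline{z}}=\eta e^{\zeta z+\eta\overline{z}}$, integration by parts gives
\[
G(\zeta,\eta)=P(\zeta,\eta)\,e^{\zeta a+\eta\overline{a}},\qquad P(\zeta,\eta):=\sum_{m,n}c_{mn}(-\zeta)^m(-\eta)^n.
\]
The factorization $G=\Phi\Psi$ then reads $P(\zeta,\eta)=\bigl(\Phi(\zeta)e^{-\zeta a}\bigr)\bigl(\Psi(\eta)e^{-\eta\overline{a}}\bigr)$, exhibiting the polynomial $P$ as a product of an entire function of $\zeta$ alone and an entire function of $\eta$ alone; such factors must themselves be polynomials, so $P(\zeta,\eta)=p(-\zeta)q(-\eta)$ for one-variable polynomials $p,q$, and $P(0,0)=\langle u,1\rangle=1$ lets me normalize $p(0)=q(0)=1$. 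Unwinding the derivatives gives $u=p(\partial)q(\overline{\partial})\delta_a$. The converse is a direct computation: for $u=p(\partial)q(\overline{\partial})\delta_a$ the same integration by parts yields $G(\zeta,\eta)=\bigl(p(-\zeta)e^{\zeta a}\bigr)\bigl(q(-\eta)e^{\eta\overline{a}}\bigr)$, which factors, so \eqref{E:powers} and hence \eqref{E:fg} hold; and $u=0$ is trivially weakly multiplicative.
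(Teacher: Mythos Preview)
Your overall strategy---encoding weak multiplicativity as the factorization $G(\zeta,\eta)=\Phi(\zeta)\Psi(\eta)$, disposing of the case $\langle u,1\rangle=0$, and identifying $u$ once its support is known to be a single point---is sound, and it is genuinely different from the paper's route. The paper never touches Paley--Wiener--Schwartz: it works instead with the tensor product $u\otimes u$ on $\CC^2$, shows from \eqref{E:powers} that $\bigl\langle u\otimes u,\,|z_1-z_2|^2|s(z_1,z_2)|^2\bigr\rangle=0$ for every symmetric polynomial $s$, and then, via a density lemma for $C^\infty(\CC^2)$ combined with the substitution $(w_1,w_2)=(z_1+z_2,z_1z_2)$, concludes that $\supp(u\otimes u)$ lies in the diagonal $\{z_1=z_2\}$. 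Since $\supp(u\otimes u)=\supp u\times\supp u$, this forces $\supp u$ to be a singleton. The endgame (structure theorem plus the moment relations) is essentially the same in both arguments.

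The step you flag is a genuine gap, not merely a technicality. Paley--Wiener--Schwartz does \emph{not} assert that the ray indicator of the Fourier--Laplace transform equals the support function of $K=\mathrm{ch}(\supp u)$ pointwise; it gives only the upper bound $h_G(\alpha,\beta)\le\max_{z\in K}\bigl(\Re(e^{i\alpha}z)+\Re(e^{i\beta}\overline z)\bigr)$, while the sharp converse recovers $H_K(\eta)$ from the growth of $\hat u$ uniformly over the real slices $\RR^2+i t\eta$, not along an individual complex ray. For $\Phi$ and $\Psi$ the situation is worse still, since $\Phi(\zeta)=G(\zeta,0)$ is merely a one-dimensional slice of the full transform. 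A concrete witness: if $u$ is normalized arc-length measure on the unit circle, then $\langle u,z^j\rangle=0$ for all $j\ge1$, so $\Phi\equiv1$ and $h_\Phi\equiv0$, whereas $K=\overline{\DD}$ gives $\max_{z\in K}\Re(e^{i\alpha}z)=1$. This $u$ is not weakly multiplicative, of course, but the example shows that your three displayed indicator identities are not consequences of PWS alone; without them the key equation ``maximum of the sum equals sum of the maxima'' is unproven. The factorization $G=\Phi\Psi$ does impose strong rigidity and a repair (for instance via the tube-domain form of PWS rather than ray indicators) may well be possible, but that is a new argument, not the one you have written.
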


This theorem is a generalization of a result of Youssfi \cite[Proposition 1]{Yo20},
who treated the special case of  measures $u$ that satisfy \eqref{E:powers}
(in which  case, the conclusion is simply that $u$ is zero or a  Dirac measure).
The extra generality afforded by distributions is important for the application that follows.

Youssfi's proof relies on a result of  Luecking \cite{Lu08}
characterizing  Toeplitz operators on the Bergman space that have finite rank.
Luecking's result applies only to Toeplitz operators with measure-valued symbols,
but his result was subsequently extended by Alexandrov and Rozenblum \cite{AR09}
to cover the case of distribution-valued symbols,
and in principle their result could be used to deduce Theorem~\ref{T:dist}.
However, the Toeplitz operator that arises in our particular situation actually has rank one, 
which permits a considerable simplification of these ideas, leading to a direct proof of Theorem~\ref{T:dist}.
This proof is presented in \S\ref{S:dist}.

In the second part of the note, we present an application of Theorem~\ref{T:dist}
to weighted Dirichlet spaces.
Let $\DD$ be the open unit disk and
let $dA$ be normalized area measure on $\DD$.
We shall call $\omega$ a \emph{weight} on $\DD$ if $\omega\in L^1(\DD,dA)$ and $\omega\ge0$.
The \emph{weighted Dirichlet space} $\cD_\omega$ is the set of holomorphic functions $f$ on $\DD$ such that
\[
\cD_\omega(f):=\int_\DD |f'(z)|^2 \omega(z)\,dA(z)<\infty.
\]
It is known that, for certain weights, $\cD_\omega$ is a de Branges--Rovnyak space.
What this means and why it is significant will be explained in \S\ref{S:Dirichlet}.
The superharmonic weights $\omega$ for which $\cD_\omega$ is a de Branges--Rovnyak space
were classified in \cite{EKKMR16}. The following result,
which is an application of Theorem~\ref{T:dist},
takes care of the case of non-superharmonic weights,
thereby completing the classification.

\begin{theorem}\label{T:Dirichlet}
If $\omega$ is a weight on $\DD$ such that $\cD_\omega$ is a de Branges--Rovnyak space,
then $\omega$ is (almost-everywhere equal to) a function that is superharmonic on $\DD$.
\end{theorem}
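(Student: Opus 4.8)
The plan is to translate the hypothesis into a support statement for the distribution $\Delta\omega$ and then apply Theorem~\ref{T:dist}. Recall that $\omega$ is superharmonic exactly when $\Delta\omega\le0$ as a distribution on $\DD$, and observe first that it suffices to prove that $\Delta\omega$ (as a distribution on $\DD$) is supported in at most one point. Indeed, suppose $\Delta\omega$ vanishes off a single point $a\in\DD$, the empty case ($\omega$ harmonic) being subsumed. By Weyl's lemma $\omega$ is then equal a.e.\ to a function harmonic on $\DD\setminus\{a\}$, and since $\omega\ge0$ this function is positive and harmonic on a punctured neighbourhood of $a$; B\^ocher's theorem identifies it as $c\log(1/|z-a|)+h$ with $c\ge0$ and $h$ harmonic on $\DD$. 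Thus $\Delta\omega=-2\pi c\,\delta_a\le0$, so $\omega$ agrees a.e.\ with a superharmonic function. Note that nonnegativity does double duty here: it both yields the explicit form and rules out, via B\^ocher, any higher-order part of $\Delta\omega$ at $a$. Everything therefore reduces to producing this single point.

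Before doing so I would fix the de~Branges--Rovnyak data. Since $\omega\in L^1(\DD,dA)$, every polynomial belongs to $\cD_\omega$; in particular $1\in\cD_\omega=\cH(b)$, which forces $b$ to be non-extreme (so that the usual companion-function machinery is available and constants lie in the space). The structural input I would exploit is the reproducing-kernel identity
\[
(1-\bar w z)\,k^b_w(z)=1-\overline{b(w)}\,b(z),
\]
whose right-hand side, apart from the constant, is the rank-one kernel $\overline{b(w)}\,b(z)$, a function of $z$ times a function of $\bar w$. This rank-one factorization is the algebraic trace of weak multiplicativity, and the crux of the proof is to transfer it to a distribution attached to $\omega$.

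Concretely, I would let $\mu$ denote the distribution $\Delta\omega$ on $\DD$ and study its moment matrix $\langle\mu,z^j\bar z^k\rangle$. Using Green's formula to pass from the weighted Dirichlet form $\int_\DD f'\overline{g'}\,\omega\,dA$ to the interior pairing $\langle\mu,f\overline g\rangle$, and then feeding in the kernel identity above, the rank-one defect $\overline{b(w)}b(z)$ should force this moment matrix to have rank one. After the scalar normalization dictated by \eqref{E:fg}, this says that a suitable multiple $u$ of $\mu$ satisfies $\langle u,f\overline g\rangle=\langle u,f\rangle\langle u,\overline g\rangle$ for all entire $f,g$; that is, $u$ is weakly multiplicative. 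I expect this to be the main obstacle, and two points need genuine care. First, one must show that $\mu$ actually lives on a compact subset of $\DD$, so that $u$ is a compactly supported distribution to which Theorem~\ref{T:dist} applies. Second, one must separate the interior pairing cleanly from the boundary terms generated by Green's formula, since it is the interior part --- not the full Dirichlet form, which has infinite rank --- that inherits the rank-one structure of the kernel. Equivalently, this step is the rank-one-ness of the Toeplitz operator with symbol $\mu$.

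Granting this, the conclusion is immediate. By Theorem~\ref{T:dist}, either $u=0$ or $u=p(\partial)q(\overline{\partial})\delta_a$ for some $a\in\DD$ and polynomials $p,q$ with $p(0)=q(0)=1$. In either case $u$, and hence $\Delta\omega$ on $\DD$, is supported in at most the single point $a$. By the reduction of the first paragraph, $\omega$ is then a.e.\ equal to $c\log(1/|z-a|)+h$ with $c\ge0$ and $h$ harmonic, which is superharmonic on $\DD$. This completes the proof.
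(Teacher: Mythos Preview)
Your overall strategy matches the paper's: manufacture a weakly multiplicative distribution out of $\Delta\omega$, invoke Theorem~\ref{T:dist} to pin its support to a single point, and then finish with Weyl's lemma plus a removable-singularity argument. Your first-paragraph reduction is essentially the paper's endgame.

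The gap is exactly where you flag it, and it is real. Your candidate $\mu=\Delta\omega$, viewed as a distribution on $\DD$, has no reason to be compactly supported in $\DD$; you cannot establish that without already knowing the conclusion, so Theorem~\ref{T:dist} does not apply to it directly. The paper does not attempt this. Instead it extends $\omega$ by zero to all of $\CC$ (so $\omega$ itself is compactly supported there) and works with
\[
u:=-\tfrac{1}{4\pi}(1-|z|^2)\,\Delta\omega,
\]
which is compactly supported in $\overline{\DD}$. The factor $(1-|z|^2)$ is not cosmetic: it appears because of the identity
\[
\Delta_z\Bigl(\frac{1-|z|^2}{|1-z\overline{w}|^2}\Bigr)=-4\,\frac{1-|w|^2}{|1-z\overline{w}|^4},
\]
and it is $u$, not $\Delta\omega$, that turns out to be weakly multiplicative. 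The boundary contribution you worry about is absorbed by this weight rather than separated out via Green's formula. Furthermore, the paper does not start from the kernel identity $(1-\overline{w}z)k^b_w(z)=1-\overline{b(w)}b(z)$; it quotes a formula from \cite{EKKMR16} (recorded here as Lemma~\ref{L:Dirichlet}) which, after the normalization $\|\omega\|_{L^1}=1$, delivers
\[
\int_\DD\frac{1-|w|^2}{|1-z\overline{w}|^4}\,\omega(z)\,dA(z)=|h(w)|^2
\]
for some holomorphic $h$ with $|h(0)|=1$. Expanding both sides in powers of $w,\overline{w}$ then gives $\langle u,z^j\overline{z}^k\rangle=\overline{h}_jh_k$ and hence weak multiplicativity in one line. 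Your Green's-formula route may be completable, but as written the two obstacles you name are not resolved, and the device of passing to $(1-|z|^2)\Delta\omega$ on $\CC$ is the missing idea.
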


The theory of Dirichlet spaces with harmonic weights was developed by 
Richter \cite{Ri91} and Richter--Sundberg \cite{RS91},
and extended to the case of superharmonic weights by Aleman \cite{Al93}.
There are  now many beautiful results in this area,
notably Shimorin's theorem \cite{Sh02} that $\cD_\omega$ has a complete Pick kernel
whenever $\omega$ is a superharmonic weight.
Theorem~\ref{T:Dirichlet} may be viewed as a further vindication that 
the superharmonic weights form a natural class.
The theorem is proved in \S\ref{S:Dirichlet}.


\section{Weakly multiplicative distributions}\label{S:dist}

To prove Theorem~\ref{T:dist},
we first establish a density result for $C^\infty(\CC^2)$.
As usual, we endow this space with the Fr\'echet-space topology 
of uniform convergence of all derivatives on compact subsets.

\begin{lemma}\label{L:density}
Functions of the form $|r(z_1,z_2)|^2$, where $r$ is a polynomial,
span a dense subspace of $C^\infty(\CC^2)$.
\end{lemma}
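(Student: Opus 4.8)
The plan is to reduce the lemma, by a polarization argument, to the classical fact that polynomials in the real coordinates are dense in $C^\infty(\RR^4)$, and then to prove that density by mollification followed by Taylor truncation. Throughout I identify $\CC^2$ with $\RR^4$ via $z_j=x_j+iy_j$, so that the four real coordinates are $x_1,y_1,x_2,y_2$.

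First I would record the polarization identity
\[
r\overline{s}=\frac14\sum_{k=0}^{3}i^{k}\,|r+i^{k}s|^{2},
\]
valid for every pair of polynomials $r,s$, and let $V$ denote the complex-linear span of the functions $|r|^2$. Applying the identity with $r=z^\alpha$ and $s=z^\beta$ (multi-indices in the variables $z_1,z_2$) shows that every monomial $z^\alpha\overline{z}^\beta$ lies in $V$. Since, conversely, each $|r|^2$ is itself a polynomial in $z_1,z_2,\overline{z}_1,\overline{z}_2$, the span $V$ coincides exactly with the space $\mathcal P$ of all polynomials in $z_1,z_2,\overline{z}_1,\overline{z}_2$; and because $x_j=(z_j+\overline{z}_j)/2$ and $y_j=(z_j-\overline{z}_j)/(2i)$, this $\mathcal P$ is precisely the space of polynomials (with complex coefficients) in $x_1,y_1,x_2,y_2$. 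It therefore remains only to show that $\mathcal P$ is dense in $C^\infty(\CC^2)=C^\infty(\RR^4)$.

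For this density, fix $\phi\in C^\infty(\RR^4)$; since the Fr\'echet topology is generated by the seminorms associated to a compact set $K$ and an order $N$, it suffices, given $K,N,\varepsilon$, to find $p\in\mathcal P$ with $\sup_{x\in K,\,|\alpha|\le N}|\partial^\alpha(\phi-p)(x)|<\varepsilon$. Multiplying $\phi$ by a cut-off equal to $1$ on a neighbourhood of $K$, I may assume $\phi\in C_c^\infty(\RR^4)$. Convolving with the Gaussian $G_t(x)=(4\pi t)^{-2}e^{-|x|^2/4t}$ yields $\phi_t:=\phi*G_t$, and since $(G_t)_{t>0}$ is an approximate identity one has $\partial^\alpha\phi_t=(\partial^\alpha\phi)*G_t\to\partial^\alpha\phi$ uniformly as $t\to0^+$; so for $t$ small, $\phi_t$ is within $\varepsilon/2$ of $\phi$ in the seminorm above. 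The point is that $G_t$ extends to an entire function on $\CC^4$, whence the compactly supported convolution $\phi_t$ extends to an entire function $\Phi_t$ on $\CC^4$. Its Taylor series about the origin then converges to $\Phi_t$ in the topology of $C^\infty(\CC^4)$, and the partial sums, restricted to $\RR^4$, are polynomials in $x_1,y_1,x_2,y_2$, i.e.\ elements of $\mathcal P$; a sufficiently high partial sum $p$ then lies within $\varepsilon$ of $\phi$ in the seminorm above.

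The polarization step is purely algebraic, so the main obstacle will be the density step, where two points need care. First, one must justify that the Gaussian mollification of a compactly supported smooth function really is entire on $\CC^4$, so that the $C^\infty$-convergence of Taylor series quoted in \S\ref{S:intro} is available here. Second, one must check that convergence in $C^\infty(\CC^4)$ descends to convergence in $C^\infty(\RR^4)$ in all the required seminorms simultaneously; this rests on the observation that, for a holomorphic $F$, the real partial derivatives of $F|_{\RR^4}$ are exactly the restrictions of the complex derivatives of $F$, so that control of the complex derivatives of the Taylor partial sums on a compact subset of $\CC^4$ controls their real derivatives on $K$.
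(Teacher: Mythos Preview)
Your argument is correct, but it takes a different route from the paper's. Both proofs start with polarization to pass from the squares $|r|^2$ to the products $p\overline{q}$; at that point, however, the paper proceeds by duality rather than by direct approximation. It assumes a compactly supported distribution $v$ on $\CC^2$ annihilates every $|r|^2$, hence every $p\overline{q}$, hence (by Taylor convergence) every $f\overline{g}$ with $f,g$ entire; choosing $f,g$ as suitable exponentials forces the Fourier transform $\hat v$ to vanish identically, so $v=0$, and Hahn--Banach finishes. Your approach instead observes that the span of the $p\overline{q}$ is exactly the space of polynomials in the four real coordinates, and then supplies a concrete approximation scheme (cutoff, Gaussian mollification to an entire function, Taylor truncation) showing those polynomials are dense in $C^\infty(\RR^4)$. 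The paper's proof is shorter and avoids any explicit construction, at the cost of invoking the Fourier transform of distributions and Hahn--Banach; your proof is more self-contained and constructive, but longer. Either argument is perfectly adequate for the lemma.
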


\begin{proof}
Let $v$ be a compactly supported distribution on $\CC^2$ such that,
for all polynomials $r(z_1,z_2)$,
\[
\langle v,\,|r|^2\rangle=0.
\]
By polarization, it follows that,
for all polynomials $p(z_1,z_2)$ and $q(z_1,z_2)$, we have
\[
\langle v,\,p\overline{q}\rangle=0.
\]
Since the Taylor series of an entire function on $\CC^2$ converges to the function in the topology
of $C^\infty(\CC^2)$, we deduce that, for all entire functions $f,g$ on $\CC^2$,
\[
\langle v,\,f\overline{g}\rangle=0.
\]
In particular, taking $f(z_1,z_2):=e^{(b-ia)z_1/2+(d-ic)z_2/2}$ and  $g(z):=e^{-(b-ia)z_1/2-(d-ic)z_2/2}$, 
we see that, for all $a,b,c,d\in\RR$,
\[
\langle v,\, e^{-i(ax_1+by_1+cx_2+dy_2)}\rangle=0.
\]
This amounts to saying that the Fourier transform $\hat{v}$  of $v$ satisfies
$\hat{v}(a,b,c,d)=0$ for all $a,b,c,d\in\RR$,
whence also $v=0$.

We have shown that the only continuous linear functional on $C^\infty(\CC^2)$ vanishing
on all functions of the form $|r(z_1,z_2)|^2$ is the zero functional. By the Hahn--Banach theorem,
functions of this form span a dense subspace of $C^\infty(\CC^2)$.
\end{proof}

\begin{proof}[Proof of Theorem~\ref{T:dist}]
If $u=p(\partial)q(\overline{\partial})\delta_a$ with $p(0)=q(0)=1$, then,
for every pair of entire functions $f,g$, we have
\[
\langle u,f\overline{g}\rangle
=(q(-\overline{\partial})p(-\partial)f\overline{g})(a)
=(p(-\partial)f)(a).(q(-\overline{\partial})\overline{g})(a)
=\langle u,f\rangle\langle u,\overline{g}\rangle,
\]
and thus \eqref{E:fg} holds. Obviously \eqref{E:fg} also holds if $u=0$.

We now turn to the converse.
Let $u$ be a compactly supported distribution on $\CC$ such that 
\eqref{E:fg} holds for every pair of entire functions $f,g$.
Consider the tensor product $u\otimes u$. By definition, this is the unique 
compactly supported distribution on $\CC^2$ such that
\begin{equation}\label{E:tensor}
\langle u\otimes u,\, \psi_1(z_1)\psi_2(z_2)\rangle
=\langle u,\psi_1\rangle\langle u,\psi_2\rangle
\quad(\psi_1,\psi_2\in C^\infty(\CC)).
\end{equation}
Let  $j,k,m,n$ be non-negative integers, and consider the expression
\[
\Bigl\langle u\otimes u,~ (z_1-z_2)(z_1^jz_2^k+z_1^kz_2^j)\overline{z}_1^m\overline{z}_2^n\Bigr\rangle.
\]
Expanding this out using \eqref{E:tensor} and then \eqref{E:powers}, we find that it is equal to $0$.
Any polynomial $s(z_1,z_2)$ that is symmetric  
(i.e.\ $s(z_1,z_2)=s(z_2,z_1)$) is a linear combination of polynomials of the form
$(z_1^jz_2^k+z_1^kz_2^j)$. Hence, given a symmetric polynomial $s(z_1,z_2)$ and an arbitrary polynomial $t(z_1,z_2)$, we have
\[
\Bigl\langle u\otimes u,~(z_1-z_2)s(z_1,z_2)\overline{t(z_1,z_2)}\Bigr\rangle=0.
\]
In particular, taking $t(z_1,z_2):=(z_1-z_2)s(z_1,z_2)$, 
we deduce that, for all symmetric polynomials $s(z_1,z_2)$,
\begin{equation}\label{E:sympoly}
\Bigl\langle u\otimes u,~|z_1-z_2|^2|s(z_1,z_2)|^2\Bigr\rangle=0.
\end{equation}

Our goal now is to show that the support of the distribution $u\otimes u$ is contained in
the diagonal set $\Delta:=\{(z_1,z_2)\in \CC^2: z_1=z_2\}$. 
Let $\psi\in C^\infty(\CC^2)$ be a function such that $\psi=0$ on an open neighborhood of $\Delta$.
Define $\rho:\CC^2\to\CC$ by
\begin{align*}
\rho(w_1,w_2):=&\psi\Bigl(\frac{w_1}{2}+\frac{1}{2}\sqrt{w_1^2-4w_2},~\frac{w_1}{2}-\frac{1}{2}\sqrt{w_1^2-4w_2}\Bigr)\\
&+\psi\Bigl(\frac{w_1}{2}-\frac{1}{2}\sqrt{w_1^2-4w_2},~\frac{w_1}{2}+\frac{1}{2}\sqrt{w_1^2-4w_2}\Bigr).
\end{align*}
The symmetry in the definition ensures that $\rho$ is well defined, 
and the fact that $\psi$ vanishes on a neighborhood of $\Delta$
ensures that $\rho\in C^\infty(\CC^2)$.
By Lemma~\ref{L:density}, $\rho(w_1,w_2)$ is the limit in $C^\infty(\CC^2)$ of finite linear combinations
of functions of the form $|r(w_1,w_2)|^2$ where $r$ is a polynomial.
Therefore $\rho(z_1+z_2,~z_1z_2)$ is the limit in $C^\infty(\CC^2)$ of finite linear combinations
of functions of the form $|r(z_1+z_2,\,z_1z_2)|^2$ where $r$ is a polynomial.
Since $r(z_1+z_2,\,z_1z_2)$ is a symmetric polynomial, \eqref{E:sympoly} implies that
\[
\Bigl\langle u\otimes u,~|z_1-z_2|^2|r(z_1+z_2,\,z_1z_2)|^2\Bigr\rangle=0.
\]
By linearity and continuity, it follows that
\[
\Bigl\langle u\otimes u,~|z_1-z_2|^2\rho(z_1+z_2,\,z_1z_2)\Bigr\rangle=0.
\]
However, a simple computation shows that, for all $z_1,z_2\in\CC$,
\[
\rho(z_1+z_2,\,z_1z_2)=\psi(z_1,z_2)+\psi(z_2,z_1),
\]
whence, using the symmetry of $u\otimes u$ and $|z_1-z_2|^2$, we obtain
\[
\Bigl\langle u\otimes u,~|z_1-z_2|^2\psi(z_1,z_2)\Bigr\rangle=0.
\]
As this holds for all $\psi\in C^\infty(\CC^2)$ that vanish on an  open neighborhood of $\Delta$,
we conclude that $\supp(u\otimes u)\subset\Delta$, as claimed.

Now it is well known that, in general, $\supp(u\otimes u)=\supp u\times \supp u$
(see e.g.\ \cite[Theorem~4.3.3\,(ii)]{Fr98}). Together with the inclusion $\supp(u\otimes u)\subset\Delta$,
this implies that either $\supp u$ is empty or that it consists of a single point in $\CC$.
If $\supp u$ is empty, then $u=0$. If $\supp u=\{a\}$ for some $a\in\CC$,
then, by \cite[Theorem~3.2.1]{Fr98},
$u$ must be a finite linear combination of derivatives of the Dirac distribution at $a$, 
so it can be written as
\[
u=\sum_{j,k\le N}c_{jk}\partial ^j\overline{\partial}^k\delta_a,
\]
for some choice of coefficients $c_{jk}\in\CC$. In this case, for all integers $m,n\ge0$,
\[
\langle u,\, (z-a)^m\overline{(z-a)^n}\rangle=(-1)^{m+n}m!n!c_{mn}.
\]
But also, by \eqref{E:fg}, we have
\[
\langle u,\, (z-a)^m\overline{(z-a)^n}\rangle=\langle u,\, (z-a)^m\rangle\langle u,\, \overline{(z-a)^n}\rangle.
\]
It follows that $c_{mn}=c_{m0}c_{0n}$ for all $m,n$. In particular, $c_{00}=c_{00}^2$, so  $c_{00}=0$ or $1$.
If $c_{00}=0$, then $c_{m0}=c_{0n}=0$ for all $m,n$,
whence $c_{mn}=0$ for all $m,n$, and so $u=0$. If $c_{00}=1$, then
\[
u=(\sum_{j\le N}c_{j0}\partial^j)(\sum_{k\le N} c_{0k}\overline{\partial}^k)\delta_a=p(\partial)q(\overline{\partial})\delta_a,
\]
where $p,q$ are polynomials with $p(0)=q(0)=1$. 
This completes the proof.
\end{proof}


\section{Weighted Dirichlet spaces and de Branges--Rovnyak spaces }\label{S:Dirichlet}

Given a holomorphic function $b:\DD\to\overline{\DD}$, the associated
\emph{de Branges--Rovnyak space}
$\cH(b)$ is the unique reproducing kernel Hilbert space with kernel
\[
\frac{1-b(z)\overline{b(w)}}{1-z\overline{w}}.
\]
It is always a subspace of the Hardy space $H^2$,
but not necessarily closed in $H^2$.
For background on de Branges--Rovnyak spaces, we refer to the books of
Sarason \cite{Sa94} and Fricain--Mashreghi \cite{FM16a,FM16b}.

In this section we address the following question:
\emph{For which weights $\omega$ is $\cD_\omega$ a de Branges--Rovnyak space?}
By this we mean that there exists a holomorphic function $b:\DD\to\overline{\DD}$
such that $\cD_\omega=\cH(b)$ as sets and also such that
\begin{equation}\label{E:isometry}
\|f\|_{\cH(b)}^2=\|f\|_{H^2}^2+\cD_\omega(f)
\quad(f\in\cD_\omega).
\end{equation}

The first examples of such  weights were given by Sarason \cite{Sa97}.
He showed that, for each of the weights
\begin{equation}\label{E:harmweights}
\omega_\zeta(z):=\frac{1-|z|^2}{|z-\zeta|^2} \quad(\zeta\in\partial\DD),
\end{equation}
the space $\cD_{\omega_\zeta}$ is a de Branges--Rovnyak space.
From this fact, he was able to deduce an interesting inequality for
the dilations $f_r(z):=f(rz)~(0<r<1)$ of a function $f\in\hol(\DD)$, namely that
$\cD_{\omega}(f_r)\le \cD_{\omega}(f)$, not just for $\omega=\omega_\zeta$,
but for all harmonic weights $\omega$ on $\DD$.

In view of the fact that this last inequality holds for all harmonic weights, 
one might wonder if indeed $\cD_\omega$ is a
de Branges--Rovnyak space for every harmonic weight on $\DD$.
The authors of \cite{CGR10} showed that this is actually \emph{not} the case. 
In fact, among harmonic weights, the only ones for which $\cD_\omega$ is a
de Branges--Rovnyak space are positive multiples of those displayed in \eqref{E:harmweights}.

The authors of \cite{EKKMR16} took this analysis one step further by examining
superharmonic weights. They showed that, within this class,
there is a new family of weights  
for which $\cD_\omega$ is a de Branges--Rovnyak space, namely
those of the form
\begin{equation}\label{E:superharmweights}
\omega_\zeta(z):=\log\Bigl|\frac{1-\overline{\zeta}z}{z-\zeta}\Bigr|
\quad(\zeta\in\DD).
\end{equation}
They further showed that, among superharmonic weights, 
the only ones for which $\cD_\omega$ is a
de Branges--Rovnyak space are positive multiples of those displayed in 
\eqref{E:harmweights} and \eqref{E:superharmweights}.

The article \cite{EKKMR16} concluded with the question of what happens in the case
of non-superharmonic weights. Theorem~\ref{T:Dirichlet} above answers this question.

To prove this theorem,
we need the following lemma, essentially taken from \cite{EKKMR16}.

\begin{lemma}\label{L:Dirichlet}
Let $\omega$ be a weight on $\DD$ such that $\|\omega\|_{L^1(\DD)}=1$.
If $\cD_\omega$ is a de Branges--Rovnyak space, then
there exists a holomorphic function $\phi$ on $\DD$ such that
\begin{equation}\label{E:phi}
\frac{|\phi(w)|^2}{1-|w|^2}=\int_\DD\frac{|w|^2}{|1-z\overline{w}|^4}\omega(z)\,dA(z)
\quad(w\in\DD).
\end{equation}
\end{lemma}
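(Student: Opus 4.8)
The plan is to recognise the right-hand side of \eqref{E:phi} as a weighted Dirichlet integral of a Szeg\H{o} kernel and then to evaluate that integral using the structure theory of $\cH(b)$. Writing $k_w(z):=1/(1-z\overline{w})$ for the Szeg\H{o} kernel, one has $k_w'(z)=\overline{w}/(1-z\overline{w})^2$, so that $|k_w'(z)|^2=|w|^2/|1-z\overline{w}|^4$ and the right-hand side of \eqref{E:phi} is exactly $\cD_\omega(k_w)=\int_\DD|k_w'(z)|^2\omega(z)\,dA(z)$. Since $k_w$ is bounded with bounded derivative on $\DD$ and $\omega\in L^1(\DD)$, we have $\cD_\omega(k_w)<\infty$, so $k_w\in\cD_\omega$. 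The goal thus reduces to producing a holomorphic $\phi$ on $\DD$ with $\cD_\omega(k_w)=|\phi(w)|^2/(1-|w|^2)$.

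First I would pin down the function $b$. Because every polynomial lies in $\cD_\omega$ and $\cD_\omega=\cH(b)$, the space $\cH(b)$ contains all polynomials; by Sarason's theory \cite{Sa94} this forces $b$ to be a non-extreme point of the closed unit ball of $H^\infty$. Hence $b$ has a Pythagorean mate: an outer function $a$ with $a(0)>0$ and $|a|^2+|b|^2=1$ a.e.\ on $\TT$. As $a$ is outer it is zero-free on $\DD$, so $\phi:=b/a$ is holomorphic on $\DD$; this is the function I would use. The heart of the argument is the norm formula for non-extreme $\cH(b)$ spaces (see \cite{Sa94}): for $f\in\cH(b)$ there is a unique $f^+\in H^2$ with $T_{\overline{a}}f^+=T_{\overline{b}}f$, and $\|f\|_{\cH(b)}^2=\|f\|_{H^2}^2+\|f^+\|_{H^2}^2$. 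Comparing this with the isometry hypothesis \eqref{E:isometry} gives $\cD_\omega(f)=\|f^+\|_{H^2}^2$ for every $f\in\cD_\omega$. I would then apply this with $f=k_w$: using the eigenrelations $T_{\overline{b}}k_w=\overline{b(w)}\,k_w$ and $T_{\overline{a}}k_w=\overline{a(w)}\,k_w$ (valid since $a,b\in H^\infty$), together with $a(w)\neq0$, one checks that $(k_w)^+=\overline{\phi(w)}\,k_w$, whence
\[
\cD_\omega(k_w)=\|(k_w)^+\|_{H^2}^2=|\phi(w)|^2\,\|k_w\|_{H^2}^2=\frac{|\phi(w)|^2}{1-|w|^2},
\]
which, in view of the first paragraph, is precisely \eqref{E:phi}.

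The main obstacle is the correct invocation of the non-extreme $\cH(b)$ machinery: deducing that $b$ is non-extreme from the presence of polynomials in $\cD_\omega$, producing the Pythagorean mate $a$, and justifying the passage from the abstract identity $\|f\|_{\cH(b)}^2=\|f\|_{H^2}^2+\|f^+\|_{H^2}^2$ to the explicit evaluation $(k_w)^+=\overline{\phi(w)}k_w$ (in particular, that $k_w$ lies in the domain where $f^+$ is defined, which is automatic here since $k_w\in\cH(b)$). By contrast, the remaining steps—identifying the integral in \eqref{E:phi} as $\cD_\omega(k_w)$ and the two eigenrelations for co-analytic Toeplitz operators—are routine.
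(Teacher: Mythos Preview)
Your argument is correct and is, in substance, the one the paper invokes: the paper does not prove the lemma itself but cites \cite[Theorem~6.2, formula~(6.2)]{EKKMR16}, and the remark immediately following the lemma confirms that the function $\phi$ is precisely $b/a$, the ratio of $b$ to its Pythagorean mate---exactly the function you construct. Your reconstruction via the Sarason decomposition $\|f\|_{\cH(b)}^2=\|f\|_{H^2}^2+\|f^+\|_{H^2}^2$ together with the co-analytic Toeplitz eigenrelations $T_{\overline{b}}k_w=\overline{b(w)}\,k_w$ and $T_{\overline{a}}k_w=\overline{a(w)}\,k_w$ is the standard route to formula~(6.2) in that reference, so there is no meaningful difference in approach.
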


\begin{proof}
See the proof of \cite[Theorem~6.2]{EKKMR16}, and in particular the formula (6.2) in that proof.
\end{proof}

\begin{remarks}
(i) The function $\phi$ in Lemma~\ref{L:Dirichlet} is closely related to the function $b$
such that $\cD_\omega=\cH(b)$. In fact $\phi(z)=b(z)/a(z)$, where $a$ is the unique outer function
on $\DD$ such that $a(0)>0$ and $|a(e^{i\theta})|^2=1-|b(e^{i\theta})|^2$ a.e.\ on $\partial\DD$.
For more on this, see \cite{EKKMR16}. However, we do need this fact here.

(ii) The assumption that $\omega$ be normalized so that $\|\omega\|_{L^1(\DD)}=1$
is not a serious restriction. Indeed, if $\cD_\omega$ is a de Branges--Rovnyak space,
then so is $\cD_{c\omega}$ for each positive constant $c$.
\end{remarks}

\begin{proof}[Proof of Theorem~\ref{T:Dirichlet}]
Let $\omega$ be a weight on $\DD$ such that $\cD_\omega$ is a de Branges--Rovnyak space.
We can suppose that $\omega$ is normalized so that $\|\omega\|_{L^1(\DD)}=1$,
so, by Lemma~\ref{L:Dirichlet}, there exists a holomorphic function $\phi$ on $\DD$ such that 
\eqref{E:phi} holds. Clearly $\phi(0)=0$, so $h(w):=\phi(w)/w$ has a removable singularity at $w=0$. Thus
\begin{equation}\label{E:h}
\int_\DD\frac{1-|w|^2}{|1-z\overline{w}|^4}\omega(z)\,dA(z)=|h(w)|^2
\quad(w\in\DD).
\end{equation}
Extend $\omega$ to the whole of $\CC$ by defining $\omega:=0$ on $\CC\setminus\DD$.
Then $\omega$ is a compactly supported distribution on $\CC$ and,
in the notation of distributions, \eqref{E:h} may be rewritten as
\begin{equation}\label{E:hdist}
\frac{1}{\pi}\Bigl\langle \omega(z),~\frac{1-|w|^2}{|1-z\overline{w}|^4}\Bigl\rangle=|h(w)|^2
\quad(w\in\DD),
\end{equation}
the $\pi$ appearing because $dA$ is normalized area measure on $\DD$.
Now a computation shows that, for all $z,w\in\DD$,
\[
\Delta_z\Bigl(\frac{1-|z|^2}{|1-z\overline{w}|^2}\Bigr)
=4\partial_z\overline{\partial}_z\Bigl(\frac{1-z\overline{z}}{(1-z\overline{w})(1-\overline{z}w)}\Bigr)
=-4\frac{1-|w|^2}{|1-z\overline{w}|^4}.
\]
Hence
\[
-\frac{1}{4\pi}\Bigl\langle\omega(z),~\Delta_z\frac{1-|z|^2}{|1-z\overline{w}|^2}\Bigr\rangle=|h(w)|^2
\quad(w\in\DD),
\]
and so, writing $u:=-(1/4\pi)(1-|z|^2)\Delta\omega$, we have
\[
\Bigl\langle u(z),~\frac{1}{|1-z\overline{w}|^2}\Bigr\rangle=|h(w)|^2
\quad(w\in\DD).
\]
Expanding both sides out in powers of $w$ and $\overline{w}$, 
and equating coefficients of $\overline{w}^jw^k$, we find that
\[
\langle u,z^j\overline{z}^k\rangle=\overline{h}_jh_k,
\]
where $h(w)=\sum_{j}h_jw^j$ is the Taylor development of $h$.
Note also that, from formula \eqref{E:h}, we have $h_0=h(0)=\|\omega\|_{L^1(\DD)}=1$.
Hence, for all integers $j,k\ge0$, we have
\[
\langle u,z^j\overline{z}^k\rangle=\langle u,z^j\rangle\langle u,\overline{z}^k\rangle,
\]
in other words, $u$ is a weakly multiplicative distribution.
We now invoke Theorem~\ref{T:dist}. By that theorem, $u$ is supported at a single point $a\in\CC$.
Thus $u=0$ on $\DD\setminus\{a\}$, whence $\Delta\omega=0$ on $\DD\setminus\{a\}$.
By Weyl's lemma, $\omega$ is (almost everywhere equal to) a function that is harmonic on $\DD\setminus\{a\}$.
Finally, since $\omega\ge0$ on $\DD\setminus\{a\}$, it can be extended  so as to be 
superharmonic on $\DD$ (see e.g.\ \cite[Theorem~3.6.1]{Ra95}).
This completes the proof.
\end{proof}

\begin{remarks}
(i) From here it is a small step to recover explicitly the weights $\omega$ for which $\cD_\omega$
is a de Branges--Rovnyak space. Indeed, knowing that $\omega$ is a positive superharmonic function on $\DD$,
we may write it as the sum of a Green potential and positive harmonic function on $\DD$
(see e.g.\ \cite[Theorem~4.5.4]{Ra95}). 
Thus there exist finite positive Borel measures $\mu$ on $\DD$ and $\nu$ on $\partial\DD$ such that
\[
\omega(z)=\int_\DD \log\Bigl|\frac{1-\overline{\zeta}z}{\zeta-z}\Bigr|\frac{2}{1-|\zeta|^2}\,d\mu(\zeta)
+\int_{\partial\DD}\frac{1-|z|^2}{|\zeta-z|^2}\,d\nu(\zeta)
\quad(z\in\DD).
\]
A computation then shows that, as distributions on $\CC$,
\[
-(1/4\pi)(1-|z|^2)\Delta \omega=\mu+\nu.
\]
But also, from the proof of Theorem~\ref{T:dist}, 
we know that $-(1/4\pi)(1-|z|^2)\Delta\omega$ is supported at a single point.
This implies that one of $\mu,\nu$ is a multiple of a Dirac measure while the other one is zero.
This in turn proves that $\omega$ is a multiple 
of one of the weights displayed in \eqref{E:harmweights} and \eqref{E:superharmweights}.

(ii) If $\cD_\omega$ is a de Branges--Rovnyak space, then $\cD_\omega=\cH(b)$ for some $b$ unique up to multiplication
by a unimodular constant. We can recover $b$ from $\omega$ 
by exploiting the fact that $b(z)=\phi(z)a(z)$, where $\phi$ satisfies \eqref{E:phi} 
and $a$ is the outer function such that $|a(e^{i\theta})|^2=1/(1+|\phi(e^{i\theta})|^2)$ a.e.\ on~$\partial\DD$.
This procedure is carried out in \cite{EKKMR16}, and we do not repeat the details here.

(iii) We have classified those weights $\omega$ on $\DD$ for which $\cD_\omega$ is a de Branges--Rovnyak space
in the sense that $\cD_\omega=\cH(b)$ for some $b$ and also such that \eqref{E:isometry} holds.
One might also ask which weights $\omega$ satisfy $\cD_\omega=\cH(b)$ for some $b$, without requiring that
\eqref{E:isometry} holds.
Rather less is known about this problem. 
Some partial results may be found in \cite{CR13}.
\end{remarks}

\section*{Acknowledgements}
The authors are grateful to Trieu Le for drawing the article  \cite{AR09} to their attention, and to Omar El-Fallah, Karim Kellay and Vadim Ognov for helpful discussions.

\bibliographystyle{plain}      
\bibliography{biblio}

\end{document}